\documentclass[11pt]{article}
\usepackage{setspace}
\usepackage{physics}
\usepackage{dcolumn} 
\usepackage{bm}     
\usepackage{braket}
\usepackage{amsmath,amssymb,amsfonts,amsthm}
\usepackage{mathtools}
\usepackage{xcolor}
\usepackage{tikz}
\usepackage{xparse}
\usepackage{hyperref}
\usepackage{cleveref}
\usepackage{listings}
\usepackage{comment}
\usepackage{dsfont}
\usepackage[T1]{fontenc}
\usepackage{url}
\usepackage{stmaryrd} 
\usepackage{graphicx}
\usepackage[normalem]{ulem}
\usepackage{nameref}
\usepackage{authblk}
\hypersetup{colorlinks=true,citecolor=blue,linkcolor=blue,filecolor=blue,urlcolor=blue}
\usepackage[
  top=1 in,
  bottom=1in,
  left=1.2in,
  right=0.75in
]{geometry}

\providecommand{\ignore}[1]{}

\newif\ifcmnt
\cmnttrue
\ifdefined\cmntsoff\cmntfalse\fi

\ifcmnt

\newtheorem{thm}{Theorem}[section]
\newtheorem{prop}[thm]{Proposition}
\newtheorem{lem}[thm]{Lemma}

\theoremstyle{definition}

\newtheorem{example}[thm]{Example}
\numberwithin{equation}{section}

\renewcommand\bra[1]{{\langle{#1}|}}
\renewcommand\ket[1]{{|{#1}\rangle}}

\numberwithin{equation}{section}

\newtheorem*{theorem*}{Theorem}

\newtheorem*{proposition*}{Proposition}

\newtheorem*{lemma*}{Lemma}

\newcommand{\bC}{\mathbb{C}}
\newcommand{\bR}{\mathbb{R}}

\newcommand{\bF}{\mathbb{F}}

\begin{document}

\title{A majorization relation for a sum of two tensor products of positive semidefinite operators}

\author{{Mohammad A. Alhejji\footnote{Correspondence: malhejji@unm.edu} \hspace{1em} Cole Kelson-Packer}}
\affil{Center for Quantum Information and Control, University of New Mexico, Albuquerque, NM 87131, USA.}

\maketitle

\begin{abstract}
We use linear programming to prove a separable version of Ky Fan's majorization relation for a sum of two operators that are each a tensor product of \(n\) positive semidefinite operators. We give an example showing that such a relation does not hold in general for sums of three or more tensor products of three or more positive semidefinite operators. 
\end{abstract}

\section{Introduction}
\label{sec: intro}

Majorization relations are immensely useful for proving inequalities. They are also rare; Robin Hood~\cite{Arnold2011} is elusive, understandably. So it is exciting when a new majorization relation is found.

We say that a vector \(x \in \bR^d\) majorizes a vector \(y \in \bR^d\) if for each \(k \in \{1,\ldots,d\} \setminus \{d\}\) the sum of the \(k\) largest coordinates of \(x\) is greater than or equal to the sum of the \(k\) largest coordinates of \(y\), and the two vectors satisfy \(\sum_{i=1}^d x_i = \sum_{i=1}^d y_i\). This relation, which we denote by \(y \preceq x\), is equivalent to the existence of probabilities \(p_1, \ldots, p_m\) and permutation matrices \(S_1, \ldots, S_m\) satisfying \(\sum_{i=1}^m p_i S_i x = y\). It is also equivalent to the statement that the value of every symmetric concave function, entropy for example, at \(x\) is less than or equal to its value at \(y\)~\cite{Bhatia1997}.

Majorization can be extended to self-adjoint operators via their eigenvalues. We denote the eigenvalues of a self-adjoint operator \(X\) on \(\bC^d\) by \(\lambda_1 (X) \geq \cdots \geq \lambda_d(X)\) and define \( \lambda(X) := (\lambda_i(X))_{i=1}^d\). For two such operators \(X\) and \(Y\), we say \(X\) majorizes \(Y\) whenever \(\lambda(Y) \preceq \lambda(X)\). In analogy with vectors in \(\bR^d\), such a relation is equivalent to the existence of probabilities \(p_1, \ldots, p_m\) and unitary operators \(U_1, \ldots, U_m\) satisfying \(\sum_{i=1}^m p_i U_i X U_i^{*} = Y\). Analogously as well, \(X\) majorizes \(Y\) if and only if the value of every unitarily invariant concave function, quantum entropy for example, at \(X\) is less than or equal to its value at \(Y\)~\cite{Ando_1989}.

Recently, there has been interest in quantum-entropy inequalities for convex mixtures of density operators (positive semidefinite operators with unit trace) that are subject to tensor-product constraints~\cite{ahmed2026singleletteronewaydistillableentanglement,Leditzky_2023,Alhejji2024}. Our main contribution is the following theorem which implies such quantum-entropy inequalities.
\begin{thm}
\label{thm: main theorem}
Let \(d_1, \ldots, d_n\) denote positive integers. Let \(A_1, \ldots, A_n\) and \(B_1, \ldots, B_n\) denote positive semidefinite operators such that, for each \(i \in \{1,\ldots,n\}\), \(A_i\) and \(B_i\) act on \(\bC^{d_i}\). Then
\begin{align}
\label{maj rel: sep ky fan}
\lambda \left( \bigotimes_{i=1}^n A_i  + \bigotimes_{i=1}^n B_i\right) \preceq \bigotimes_{i=1}^n \lambda(A_i) + \bigotimes_{i=1}^n \lambda(B_i).
\end{align}
\end{thm}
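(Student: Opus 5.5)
The plan is to fix \(k\) and compare Ky Fan sums. Both sides of \eqref{maj rel: sep ky fan} have trace \(\prod_i \operatorname{tr} A_i + \prod_i \operatorname{tr} B_i\), so it suffices to show the following. For every rank-\(k\) orthogonal projection \(P\) on \(\bigotimes_i \bC^{d_i}\), the quantity \(\operatorname{tr}(PA)+\operatorname{tr}(PB)\), with \(A=\bigotimes_i A_i\) and \(B=\bigotimes_i B_i\), is at most the sum of the \(k\) largest entries of \(a+b\), where \(a=\bigotimes_i\lambda(A_i)\) and \(b=\bigotimes_i\lambda(B_i)\). Here a multi-index \(j=(j_1,\ldots,j_n)\) labels the coordinates.

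First I will linearize the objective using the monotone structure. By telescoping, \(\lambda(A_i)=\sum_{m}(\lambda_m(A_i)-\lambda_{m+1}(A_i))\,\mathbf 1_{\{1,\ldots,m\}}\) with nonnegative coefficients. Taking tensor products writes \(a=\sum_{\mathbf m} c_{\mathbf m}\mathbf 1_{R(\mathbf m)}\) with \(c_{\mathbf m}\ge 0\), where \(R(\mathbf m)=\{1..m_1\}\times\cdots\times\{1..m_n\}\) are boxes. In the same way \(b=\sum_{\mathbf m} e_{\mathbf m}\mathbf 1_{R(\mathbf m)}\). Correspondingly \(A=\sum c_{\mathbf m}\Pi^A_{\mathbf m}\), where \(\Pi^A_{\mathbf m}=\bigotimes_i \Pi^{A_i}_{m_i}\) is the product of top-\(m_i\) spectral projections, and similarly for \(B\).

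The objective therefore becomes the linear form \(\sum c_{\mathbf m} t_{\mathbf m}+\sum e_{\mathbf m} s_{\mathbf m}\) in the overlaps \(t_{\mathbf m}=\operatorname{tr}(P\Pi^A_{\mathbf m})\) and \(s_{\mathbf m}=\operatorname{tr}(P\Pi^B_{\mathbf m})\). I then set up a linear program whose variables are these overlaps, or better, the diagonal weights \(x_j=\langle u_j|P|u_j\rangle\) and \(y_j=\langle v_j|P|v_j\rangle\) in the two product eigenbases. The constraints are \(0\le x_j,y_j\le 1\) and \(\sum x_j=\sum y_j=k\), together with coupling constraints coming from the geometry of \(P\). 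The value of the LP should equal \(\max_{|S|=k}\sum_{j\in S}(a_j+b_j)\). I will verify this by exhibiting a dual feasible solution: a threshold \(\theta\) and slacks, as in the standard LP proof of Ky Fan's inequality.

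The crucial coupling constraint must prevent \(x\) and \(y\) from independently concentrating on the top-\(k\) entries of \(a\) and of \(b\). I intend to use inequalities of the form
\[
\operatorname{tr}\bigl(P(\Pi^A_{\mathbf m}+\Pi^B_{\mathbf m'})\bigr)\le \min\Bigl\{2k,\; k+\operatorname{rank}\bigl(\Pi^A_{\mathbf m}\wedge\Pi^B_{\mathbf m'}\bigr)+\ldots\Bigr\},
\]
or, more simply, \(\operatorname{tr}(P(\Pi+\Pi'))\le \|\Pi+\Pi'\|_{(k)}\). The eigenvalues of a sum of two projections are governed by Jordan's lemma. For product projections, the Jordan angles factorize across the tensor factors, so \(\|\Pi^A_{\mathbf m}+\Pi^B_{\mathbf m'}\|_{(k)}\) is controlled by the worst-case alignment. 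In that case the boxes \(R(\mathbf m)\) and \(R(\mathbf m')\) overlap in the box \(R(\min(\mathbf m,\mathbf m'))\), and the bound becomes \(k+\min\{k,\,|R(\mathbf m)\cap R(\mathbf m')|\}\) whenever \(k\) is at most both box sizes. Substituting these pairwise bounds into the linear form and optimizing shows that the aligned (commuting, diagonal) configuration is extremal. For diagonal operators the claim is immediate, because the Ky Fan sum of a diagonal matrix is the sum of the top \(k\) diagonal entries.

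The main obstacle is the previous step. The pairwise bounds must be combined across all pairs \((\mathbf m,\mathbf m')\) simultaneously, not just one pair at a time, and this is exactly where the LP dual certificate is needed. It is also where the argument must use that there are only two summands, consistent with the counterexample for three. I would first try to construct the dual certificate by induction on \(n\). I would group the index set by the threshold \(\theta=\)\,(\(k\)-th largest entry of \(a+b\)) and verify complementary slackness on the boxes that straddle the threshold.
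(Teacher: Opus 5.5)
There is a genuine gap, and it sits exactly at the step you flag as the main obstacle. As you set it up, the step cannot be closed. An LP whose only coupling constraints are indexed by pairs of box projectors $\Pi^A_{\mathbf m},\Pi^B_{\mathbf m'}$ is too weak in general, so no dual certificate can exist.

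Your single-pair bound is fine. The number of eigenvalues of $\Pi^A_{\mathbf m}+\Pi^B_{\mathbf m'}$ exceeding $1$ is $\operatorname{rank}(\Pi^A_{\mathbf m}\Pi^B_{\mathbf m'})=\prod_i\operatorname{rank}(\cdot)\le|R(\mathbf m)\cap R(\mathbf m')|$. Note that this rank, and not the rank of the meet, is what enters. The problem is that these bounds do not suffice, as the following example shows.

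Take $n=2$, $d_1=d_2=3$, $\lambda(A_1)=\lambda(A_2)=(1,\epsilon,\epsilon)$ with $0<\epsilon<1$, $\lambda(B_1)=\lambda(B_2)=(1,1,0)$, and $k=5$. Let all $A_i,B_i$ be diagonal in one basis, so your pairwise bounds hold with equality. Let $x$ be the indicator of $L=\{(1,1),(1,2),(1,3),(2,1),(3,1)\}$ and $y$ the indicator of $Q\cup\{(3,3)\}$, where $Q=[2]\times[2]$.

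Then $X([i]\times[j])=i+j-1$, and $Y([i]\times[j])=\min(i,2)\min(j,2)$, plus $1$ if $i=j=3$. A direct check of all $81$ box pairs gives $X(R)+Y(R')\le\min(k,|R\cup R'|)+\min(k,|R\cap R'|)=\sigma_k(\mathbf 1_R+\mathbf 1_{R'})$. This is the smallest right-hand side any valid box-pair constraint can have, since it is attained in this aligned instance.

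Yet $a\cdot x+b\cdot y=5+4\epsilon>5+3\epsilon+\epsilon^2=\sigma_5(a+b)$. What excludes this point is the constraint for the pair $(L,Q)$: $X(L)+Y(Q)=9>8=\sigma_5(\mathbf 1_L+\mathbf 1_Q)$. Here $L$ is the top-$5$ set of $a$, and it is downward closed but not a box. So your claim that the LP value equals $\max_{|S|=k}\sum_{j\in S}(a_j+b_j)$ is false for the box-pair LP.

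The missing idea is to couple the top-$\ell$ eigenspaces of the full operators $\bigotimes_i A_i$ and $\bigotimes_i B_i$. These are spans of product eigenvectors over downward-closed index sets, not over boxes. The paper uses the LP of Ref.~\cite{Alhejji2025-ti}, whose alignment constraints are exactly these pairs, together with the known fact that this LP is exact for commuting operators. That removes any need to construct a dual certificate. Everything then reduces to showing that each alignment term is at most its aligned counterpart, which is Thm.~\ref{thm: majorization for proj sum}.

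For non-box downward-closed sets the projectors are no longer tensor products, so your Jordan-angle factorization is unavailable. The paper instead proves
$\dim\bigl(\langle\ket{\mathbf a_x}:x\in\Omega^{(A)}\rangle\cap\langle\ket{\mathbf b_x}:x\notin\Omega^{(B)}\rangle\bigr)\ge|\Omega^{(A)}\setminus\Omega^{(B)}|$, and the symmetric statement. It does this with the simultaneous-basis lemma for two flags (Lem.~\ref{lem: steinberg}), applied in each factor, together with the combinatorial Prop.~\ref{prop: filter and ideal intersection}. It then converts these dimension bounds into the needed majorization via Lem.~\ref{lem: two proj maj rel with orth const}.
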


When \(n=1\), this is merely a restatement of Ky Fan's majorization relation~\cite{Fan1949} (or eigenvalue inequality~\cite{Moslehian_2012}) specialized to a sum of two positive semidefinite operators. A proof based on linear programming for the \(n=2\) case is in Ref.~\cite{Alhejji2025-ti}. A part of this proof does not apply to cases where \(n > 2\); specifically, the proof of Proposition~4.2 in Ref.~\cite{Alhejji2025-ti} does not generalize. We amend this situation here.

Taking the same approach as in Ref.~\cite{Alhejji2025-ti}, we reduce the task of proving the relation \eqref{maj rel: sep ky fan} to proving majorization relations for sums of (orthogonal) projectors \(P_{\ell_1}^{(A)} + P_{\ell_2}^{(B)}\), where \(P_{\ell_1}^{(A)}\) projects onto a direct sum of eigenspaces corresponding to the \(\ell_1\) largest eigenvalues of \(\bigotimes_{i=1}^n A_i\) and \(P_{\ell_2}^{(B)}\) projects onto a direct sum of eigenspaces corresponding to the \(\ell_2\) largest eigenvalues of \(\bigotimes_{i=1}^n B_i\). We extensively use the fact that every tensor product of positive semidefinite operators admits an orthonormal eigenbasis with a natural product order.

For \(i \in [d] : =\{1,\ldots, d\}\), we write \(\ket{e_i}\) to denote the \(i\)th element of the standard orthonormal basis of \(\bC^d\). Furthermore, for a tuple \(x \in \Pi_{i=1}^n [d_i]\), we write \(\ket{\mathbf{e}_x}\) to denote the tensor-product vector \( \bigotimes_{i=1}^n \ket{e_{x_i}}\). In general, we use bold font to indicate tensor-product vectors. In a partially ordered set \(\mathcal{T}\), a subset \(S \subseteq \mathcal{T}\) is downward closed if for all \(x \in S\) and \(y \in \mathcal{T}\), the implication \(x \geq y \implies y \in S\) holds; \(S\) is upward closed if the dual implication, \(x \leq y \implies y \in S\), holds. In an \(n\)-fold Cartesian product of partially ordered sets, an element \(x\) precedes an element \(y\) according to the product order whenever \(x_i \leq y_i\) for all \(i \in [n]\).

\begin{thm}
\label{thm: majorization for proj sum}
Let \(\{ \ket{\mathbf{a}_x} \mid x \in \Pi_{i=1}^n [d_i]\}\) and \(\{ \ket{\mathbf{b}_x} \mid x \in \Pi_{i=1}^n [d_i]\}\) be orthonormal tensor-product bases of \(\bigotimes_{i=1}^n \bC^{d_i}\). Let \(\Omega^{(A)} \subseteq \Pi_{i=1}^n [d_i]\) and \(\Omega^{(B)} \subseteq \Pi_{i=1}^n [d_i]\) be downward-closed sets according to the product order on \(\Pi_{i=1}^n [d_i]\). Then
\begin{gather}
    \lambda\left(\sum_{x \in \Omega^{(A)}} \dyad{\bm{a}_x} + \sum_{x \in \Omega^{(B)}} \dyad{\bm{b}_x} \right)\preceq     \lambda\left(\sum_{x \in \Omega^{(A)}} \dyad{\bm{e}_x} + \sum_{x \in \Omega^{(B)}} \dyad{\bm{e}_x}\right). 
\end{gather}
\end{thm}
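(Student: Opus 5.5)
Write \(P=\sum_{x\in\Omega^{(A)}}\dyad{\bm a_x}\), \(Q=\sum_{x\in\Omega^{(B)}}\dyad{\bm b_x}\), \(M=|\Omega^{(A)}|\), \(N=|\Omega^{(B)}|\). The plan is to compute the right-hand side and reduce the claim to bounds on the eigenvalues of \(P+Q\). The right-hand side is diagonal in the standard basis, with entries \(2\) on \(\Omega^{(A)}\cap\Omega^{(B)}\), \(1\) on the symmetric difference, and \(0\) elsewhere. Let \(c:=|\Omega^{(A)}\cap\Omega^{(B)}|\). Both sides have trace \(M+N\). Since \(0\le P+Q\le 2\), the required partial-sum inequalities follow from two bounds:
\begin{gather*}
\tr\bigl[(P+Q-1)_+\bigr]\le c,\qquad \tr\bigl[(1-P-Q)_+\bigr]\le D-M-N+c,
\end{gather*}
where \(D=\prod_i d_i\) and \((\cdot)_+\) denotes the positive part. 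The reduction is elementary. The \(k\)-th partial sum of the right-hand side is \(\min(2k,k+c,M+N)\), and the partial sums of \(\lambda(P+Q)\) are bounded by \(2k\), by \(M+N\), and by \(k+\tr[(P+Q-1)_+]\). The second bound is the same statement after passing to complements, \(\lambda(P+Q)\mapsto 2-\lambda\). Note that \(\Omega^{(A)}\cap\Omega^{(B)}\) and the union are again downward closed.

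Next I would use the standard spectral structure of two projectors (Jordan's lemma). The space decomposes into \(\operatorname{ran}P\cap\operatorname{ran}Q\), a collection of two-dimensional blocks with eigenvalues \(1\pm\cos\theta_j\), and the intersections of the other kernels and ranges. This gives
\[
\tr\bigl[(P+Q-1)_+\bigr]=\dim(\operatorname{ran}P\cap\operatorname{ran}Q)+\sum_j\cos\theta_j=\tr\bigl[(PQP)^{1/2}\bigr]=\|PQ\|_1 .
\]
The first bound is therefore \(\|PQ\|_1\le c\), the trace norm of \(PQ\) as a matrix with entries \(\braket{\bm a_x}{\bm b_y}\) for \(x\in\Omega^{(A)}\), \(y\in\Omega^{(B)}\). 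The complementary bound is the same statement for \(1-P\) and \(1-Q\), which are spanned by upward-closed sets. Reversing every coordinate order turns these into downward-closed sets, so it suffices to prove \(\|PQ\|_1\le|\Omega^{(A)}\cap\Omega^{(B)}|\).

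To prove this, I would use duality and set it up as a linear program, following the approach of Ref.~\cite{Alhejji2025-ti}. We have \(\|PQ\|_1=\max_U|\tr(UPQ)|\). The matrix \(G_{xy}=\braket{\bm a_x}{\bm b_y}=\prod_i\langle a^{(i)}_{x_i}|b^{(i)}_{y_i}\rangle\) is a tensor product of unitary matrices \(G^{(i)}\), restricted to the submatrix indexed by rows \(\Omega^{(A)}\) and columns \(\Omega^{(B)}\). The goal is to bound the trace norm of this submatrix by \(|\Omega^{(A)}\cap\Omega^{(B)}|\).

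The key step is a doubly-substochastic relaxation. The matrix \(W_{xy}=|G_{xy}|^2=\prod_i|G^{(i)}_{x_iy_i}|^2\) is a tensor product of unistochastic matrices. A trace-norm bound of the form \(\sum_k s_k\le\max\sum_{x,y}\ldots\) should be reducible, via a Cauchy–Schwarz or weighting argument, to maximizing \(\sum_{x\in\Omega^{(A)},y\in\Omega^{(B)}}W_{xy}\,t_{xy}\) over suitable feasible \(t\). The resulting linear program has its optimum at a product-structured vertex when the index sets are downward closed. I would prove that the LP value is at most \(|\Omega^{(A)}\cap\Omega^{(B)}|\) by exhibiting a dual feasible solution: potentials \(u_x\ge0\) and \(v_y\ge0\) with \(u_x+v_y\ge\) (the relevant coefficient) and \(\sum u+\sum v=c\), built from the indicator of the intersection and an order-preserving matching. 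The matching would come from shifting \(\Omega^{(A)}\setminus\Omega^{(B)}\) and \(\Omega^{(B)}\setminus\Omega^{(A)}\) along one coordinate at a time, using downward closedness.

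I expect this last step to be the main obstacle. The stated obstruction is that the \(n=2\) argument (Proposition~4.2 there) fails for \(n>2\), and the trace norm of a submatrix of a tensor product of unitaries is not controlled entrywise. My first attempt would be induction on \(n\). I would slice \(\Omega^{(A)}\) and \(\Omega^{(B)}\) along the last coordinate into nested downward-closed families \(\Omega^{(A)}_1\supseteq\Omega^{(A)}_2\supseteq\cdots\), and similarly for \(\Omega^{(B)}\). The induction hypothesis would handle each pair of slices. The last factor would then be handled as an \(n=1\) problem with nested projectors, where the bound \(\sum_t|\Omega^{(A)}_t\cap\Omega^{(B)}_t|\) is exactly \(c\).
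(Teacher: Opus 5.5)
You reduce the theorem correctly. The right-hand side has partial sums \(\min(2k,k+c,M+N)\), so the theorem follows from \(\operatorname{tr}[(P+Q-1)_+]\le c\) alone; your complementary bound is redundant, since \(\sigma_k(P+Q)\le\operatorname{tr}(P+Q)=M+N\) already. Jordan's lemma does give \(\operatorname{tr}[(P+Q-1)_+]=\|PQ\|_1\).

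The gap is that \(\|PQ\|_1\le|\Omega^{(A)}\cap\Omega^{(B)}|\) is \emph{equivalent} to the theorem: apply the convex function \(t\mapsto(t-1)_+\) to the majorization. So all the content sits in the step you leave open, and neither route you sketch for it works.

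\textbf{The relaxation through moduli.} A bound that sees \(G\) only through \(W_{xy}=|\langle\bm a_x|\bm b_y\rangle|^2\), derived by inequalities valid for arbitrary matrices (Cauchy--Schwarz, weighting), also bounds every matrix with the same moduli. That is too weak. Take \(n=2\), \(d_1=d_2=2\), \(\Omega^{(A)}=\{(1,1),(1,2)\}\), \(\Omega^{(B)}=\{(1,1),(2,1)\}\), so \(c=1\). Let \(\bm a\) be the standard basis and \(\bm b\) the basis built from \((e_1\pm e_2)/\sqrt2\) in both factors. The relevant submatrix is \(\tfrac12\begin{pmatrix}1&1\\1&1\end{pmatrix}\), with trace norm \(1\). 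But \(\tfrac12\begin{pmatrix}1&1\\1&-1\end{pmatrix}\) has the same moduli and trace norm \(\sqrt2>c\). The bound holds here only because the true submatrix has rank one, and that is phase information.

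\textbf{The slicing induction.} The induction hypothesis controls each \(\|P_tQ_s\|_1\). However, \(PQ\) is the block matrix \(\bigl[\langle a^{(n)}_t|b^{(n)}_s\rangle\,P_tQ_s\bigr]_{t,s}\). Nothing in your sketch turns bounds on its blocks into the diagonal pairing \(\sum_t|\Omega^{(A)}_t\cap\Omega^{(B)}_t|\). The ``\(n=1\) problem with nested projectors'' is precisely the unproven part.

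\textbf{What is missing.} Bound rank, not trace norm. Since \(\|PQ\|_\infty\le1\), you have \(\|PQ\|_1\le\operatorname{rank}(QP)=|\Omega^{(A)}|-\dim(\operatorname{ran}P\cap\ker Q)\). So it suffices to show \(\dim(\operatorname{ran}P\cap\ker Q)\ge|\Omega^{(A)}\setminus\Omega^{(B)}|\). This is a basis-free statement about a downward-closed span in one product basis meeting an upward-closed span in another. It is exactly what the paper proves in Prop.~\ref{prop: dimensional bound}, and the paper then feeds it into Lem.~\ref{lem: two proj maj rel with orth const} rather than passing through \(\|PQ\|_1\). The paper's argument runs as follows.
\begin{itemize}
\item Steinberg's lemma (Lem.~\ref{lem: steinberg}) gives, in each factor, a common and generally non-orthogonal basis \(\gamma_i\). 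It is adapted to the \(a\)-flag and, up to a permutation \(\tilde\rho_i\), to the reversed \(b\)-flag.
\item In the product basis \(\bm\gamma\), \(\operatorname{ran}P\) and \(\ker Q\) become coordinate subspaces, indexed by \(\Omega^{(A)}\) and \(\prod_i\tilde\rho_i\bigl((\Omega^{(B)})^c\bigr)\). So the dimension of their intersection is a cardinality.
\item The purely combinatorial Prop.~\ref{prop: filter and ideal intersection} bounds that cardinality below by \(|\Omega^{(A)}\setminus\Omega^{(B)}|\).
\end{itemize}

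Your slice-and-exchange idea is exactly how that combinatorial inequality is proved. Your induction on \(n\) would also go through if you ran it on \(\dim(\operatorname{ran}P\cap\ker Q)\) and applied Steinberg's lemma in the last factor only, since both subspaces then split as \(\bigoplus_j(\cdot)\otimes\gamma_j\). On trace norms it does not go through, because the adapted basis is not orthonormal.
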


The contents of this paper are ordered as follows. In Sec.~\ref{sec: reduction}, we show that this theorem implies Thm.~\ref{thm: main theorem}. We prove Thm.~\ref{thm: majorization for proj sum} in Sec.~\ref{sec: proof of second theorem}. We conclude the paper in Sec.~\ref{sec: conclusion} with a counterexample showing that the relation \eqref{maj rel: sep ky fan} does not generalize to three summands when \(n \geq 3\).

\section{Reduction from Thm.~\ref{thm: main theorem} to Thm.~\ref{thm: majorization for proj sum}}
\label{sec: reduction}

For a self-adjoint operator \(X\) on \(\bC^d\) and \(k \in [d]\), we denote the operator \(\sum_{i=1}^d \lambda_i(X) \dyad{e_i}\) by \( X^{\downarrow}\) and the sum \(\sum_{i=1}^k \lambda_i(X)\) by \(\sigma_k(X)\). We observe that \eqref{maj rel: sep ky fan} is equivalent to 
\begin{align}
\label{eq: alignment sum}
\lambda \left( \bigotimes_{i=1}^n A_i  + \bigotimes_{i=1}^n B_i\right) \preceq \lambda \left( \bigotimes_{i=1}^n A_i^\downarrow  + \bigotimes_{i=1}^n B_i^\downarrow\right). 
\end{align}
By linearity of the trace, the coordinates of the two sides of this relation sum to the same number. Hence, it suffices to prove
\begin{align}
\label{ineq: sep majorization}
\sigma_k \left( \bigotimes_{i=1}^n A_i  + \bigotimes_{i=1}^n B_i\right) \leq \sigma_k \left(\bigotimes_{i=1}^n A_i^{\downarrow}  + \bigotimes_{i=1}^n B_i^{\downarrow}\right)
\end{align}
for each \(k \in [\Pi_{i=1}^n d_i-1]\).

We recall the linear programming bounds in Ref.~\cite{Alhejji2025-ti}. Let \(X\) and \(Y\) be self-adjoint operators on \(\bC^d\) with orthonormal eigendecompositions \(\sum_{i=1}^d \lambda_i(X) \dyad{x_i}\) and \(\sum_{i=1}^d \lambda_i(Y) \dyad{y_i}\), respectively. The content of Theorem~3.2 in Ref.~\cite{Alhejji2025-ti} is that \(\sigma_k(X + Y)\) is bounded from above by the optimal value of a linear maximization program whose objective function is
\begin{align}
\label{eq: objective function of lin prog}
\sum_{i=1}^d w_i^{(X)} \lambda_i(X) + \sum_{i=1}^d w_i^{(Y)} \lambda_i(Y).
\end{align}
The \(w_i^{(X)}\)'s and \(w_i^{(Y)}\)'s are the variables. The feasible set of this linear program is determined by the basic constraints
\begin{align}
\label{ineq: basic constraints}
&0 \leq w_i^{(X)} , w_i^{(Y)} \leq 1, \quad \forall i \in [d], \\
&\sum_{i=1}^d w_i^{(X)} = \sum_{i=1}^d w_i^{(Y)} = k,
\end{align}
as well as \textit{alignment constraints} which account for the overlaps between subspaces corresponding to the largest eigenvalues of \(X\) and \(Y\):
\begin{align}
\label{ineq: alignment constriants}
\sum_{i=1}^{\ell_x} w_i^{(X)} + \sum_{i=1}^{\ell_y} w_i^{(Y)} \leq \sigma_k \left( \sum_{i=1}^{\ell_x} \dyad{x_i} + \sum_{i=1}^{\ell_y} \dyad{y_i}\right), \quad \forall \ell_x, \ell_y \in [d]. 
\end{align}
We refer to the quantities appearing on the right-hand side of \eqref{ineq: alignment constriants} as \textit{alignment terms}. The optimal value of this program, \(\upsilon_k(X,Y)\), is independent of the choice of eigendecompositions for \(X\) and \(Y\); see the third remark after Theorem~3.2 in Ref.~\cite{Alhejji2025-ti}.

For simultaneously diagonalizable \(X\) and \(Y\), it holds that \(\upsilon_k(X,Y) = \sigma_k(X + Y)\) for all \(k \in [d]\); see Section~3.3 in Ref.~\cite{Alhejji2025-ti}. The two operators on the right-hand side of \eqref{ineq: sep majorization} are clearly simultaneously diagonalizable. Hence, to prove \eqref{ineq: sep majorization}, it suffices to show that  
\begin{align}
\label{ineq: sep majorization upper bound}
\upsilon_k\left( \bigotimes_{i=1}^n A_i, \bigotimes_{i=1}^n B_i\right) \leq \upsilon_k \left(\bigotimes_{i=1}^n A_i^{\downarrow}, \bigotimes_{i=1}^n B_i^{\downarrow}\right) 
\end{align}
for each \(k \in [\Pi_{i=1}^n d_i-1]\). Furthermore, the fact that \(\lambda(\bigotimes_{i=1}^n A_i) = \lambda(\bigotimes_{i=1}^n A_i^{\downarrow})\) and \(\lambda(\bigotimes_{i=1}^n B_i) = \lambda(\bigotimes_{i=1}^n B_i^{\downarrow})\) implies that the two linear programs whose optimal values are the two sides of \eqref{ineq: sep majorization upper bound} have equal objective functions. It is enough then to prove that the feasible set of the program corresponding to the right-hand side of \eqref{ineq: sep majorization upper bound} contains the feasible set of the program corresponding to the left-hand side of \eqref{ineq: sep majorization upper bound}. We accomplish this by showing that alignment terms of the right-hand side are at least as large as their counterparts of the left-hand side.

For each \(i \in [n]\), let \(\{ \ket{a_{i,j}} \}_{j=1}^{d_i}\) be an orthonormal  eigenbasis for \(A_i\) and \(\{ \ket{b_{i,j}} \}_{j=1}^{d_i}\) be an orthonormal eigenbasis for \(B_i\) such that
\begin{align}
    A_i \ket{a_{i,j}} = \lambda_j(A_i) \ket{a_{i,j}} \quad \text{and} \quad   B_i \ket{b_{i,j}} = \lambda_j(B_i) \ket{b_{i,j}}, \; \forall j \in [d_i].
\end{align}
We denote for a tuple \(x \in \Pi_{i=1}^n [d_i]\) the vector \(\bigotimes_{i=1}^n \ket{a_{i,x_i}}\) by \(\ket{\mathbf{a}_x}\) and the vector \(\bigotimes_{i=1}^n \ket{b_{i,x_i}}\) by \(\ket{\mathbf{b}_x}\), so that the set \(\{ \ket{\mathbf{a}_x} \mid x \in \Pi_{i=1}^n [d_i]\}\) is an orthonormal eigenbasis for \(\bigotimes_{i=1}^n A_i\) and the set \(\{ \ket{\mathbf{b}_x} \mid x \in \Pi_{i=1}^n [d_i]\}\) is an orthonormal eigenbasis for \(\bigotimes_{i=1}^n B_i\).

Suppose that \(x,y \in \Pi_{i=1}^n [d_i]\) satisfy \(x_i \leq y_i\) for each \(i \in [n]\). Because \(A_i \geq 0\) for each \(i \in [n]\), we have the inequality \(\Pi_{i=1}^n \lambda_{x_i} (A_i) \geq  \Pi_{i=1}^n \lambda_{y_i} (A_i)\). That is, the eigenvalue of \(\bigotimes_{i=1}^n A_i\) corresponding to \(\ket{\mathbf{a}_x}\) is greater than or equal to the eigenvalue of \(\bigotimes_{i=1}^n A_i\) corresponding to \(\ket{\mathbf{a}_y}\). The same holds for the eigenvalues of \(\bigotimes_{i=1}^n B_i\) corresponding to \(x\) and \(y\). More generally, the order of the eigenvalues of a tensor product of positive semidefinite operators on \(\bigotimes_{i=1}^n \bC^{d_i}\) includes the product order on \(\Pi_{i=1}^n [d_i]\). We write \( \langle \cdot \rangle\) to denote linear span. For each \(\ell \in [\Pi_{i=1}^n d_i]\), it can be shown by induction that there exists a downward-closed set \(\Omega_{\ell}^{(A)} \subseteq \Pi_{i=1}^n [d_i]\) such that the subspace \(\langle \{ \ket{\mathbf{a}_x} \mid x \in \Omega_\ell^{(A)}\} \rangle\) is a direct sum of eigenspaces corresponding to the \(\ell\) largest eigenvalues of \(\bigotimes_{i=1}^n A_i\). By the same token, there is a downward-closed set \(\Omega_{\ell}^{(B)} \subseteq \Pi_{i=1}^n [d_i]\) such that the subspace \(\langle \{ \ket{\mathbf{b}_x} \mid x \in \Omega_{\ell}^{(B)}\} \rangle\) is a direct sum of eigenspaces corresponding to the \(\ell\) largest eigenvalues of \(\bigotimes_{i=1}^n B_i\).

The foregoing considerations show that each of the alignment terms of the left-hand side of \eqref{ineq: sep majorization upper bound} equals
\begin{align}
\sigma_k \left( \sum_{x \in \Omega^{(A)}} \dyad{\mathbf{a}_x} + \sum_{x \in \Omega^{(B)}} \dyad{\mathbf{b}_x}\right),
\end{align}
where \(\Omega^{(A)}\) and \(\Omega^{(B)}\) are downward-closed subsets of \(\Pi_{i=1}^n [d_i]\). Our goal is to show that each of these terms is less than or equal to its counterpart of the right-hand side of \eqref{ineq: sep majorization upper bound}. That is, we aim to prove that
\begin{align}
\sigma_k \left( \sum_{x \in \Omega^{(A)}} \dyad{\mathbf{a}_x} + \sum_{x \in \Omega^{(B)}} \dyad{\mathbf{b}_x}\right) \leq \sigma_k \left( \sum_{x \in \Omega^{(A)}} \dyad{\mathbf{e}_x} + \sum_{x \in \Omega^{(B)}} \dyad{\mathbf{e}_x}\right)
\end{align}
for each \(k \in [\Pi_{i=1}^n d_i -1]\). These inequalities are implied by the majorization relation
\begin{align}
    \lambda\left( \sum_{x \in \Omega^{(A)}} \dyad{\mathbf{a}_x} + \sum_{x \in \Omega^{(B)}} \dyad{\mathbf{b}_x}\right) \preceq \lambda\left(\sum_{x \in \Omega^{(A)}} \dyad{\mathbf{e}_x} + \sum_{x \in \Omega^{(B)}} \dyad{\mathbf{e}_x}\right).
\end{align}
This completes the reduction.

\section{Proof of Thm.~\ref{thm: majorization for proj sum}}
\label{sec: proof of second theorem}

The crux of our argument is that the support (orthogonal complement of the kernel) of the projector \(\sum_{x \in \Omega^{(A)}} \dyad{\bm{a}_x}\) intersects the kernel of the projector \(\sum_{x \in \Omega^{(B)}} \dyad{\bm{b}_x}\) in a subspace whose dimension is at least \(| \Omega^{(A)} \setminus \Omega^{(B)}|\), and vice-versa. This is a corollary of Prop.~\ref{prop: dimensional bound} which we prove below using two ingredients. The first ingredient is a well-known fact about pairs of complete flags (strictly-increasing chains of \(m+1\) subspaces in an \(m\)-dimensional space)~\cite{Gillespie2019}.

\begin{lem}
\label{lem: steinberg}
Let \(\bF\) be a field. For every pair of complete flags \(V_\bullet := (0 \subset V_1 \subset \cdots \subset V_m = \bF^m)\) and \(W_\bullet := (0 \subset W_1 \subset \cdots \subset W_m = \bF^m)\), there exist a basis \(\{f_1, \ldots, f_m\}\) of \(\bF^m\) and a permutation \(\rho: [m] \rightarrow [m]\) such that for each \(i \in [m]\), \(V_i = \langle \{ f_1, \ldots, f_i\}\rangle\)  and \(W_i = \langle \{ f_{\rho(1)},\ldots, f_{\rho(i)}\}\rangle\). 
\end{lem}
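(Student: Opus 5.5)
We prove Lemma~\ref{lem: steinberg} by induction on \(m\), using the relative-position (Bruhat) argument. The case \(m=1\) is trivial: take \(f_1\) spanning \(V_1 = W_1 = \bF^1\) and \(\rho\) the identity. For the inductive step, the plan is to remove the top layer of the flag \(V_\bullet\) and restrict \(W_\bullet\) to the hyperplane \(V_{m-1}\).

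\textbf{Step 1 (restricting \(W_\bullet\)).} Consider the chain \(W_1 \cap V_{m-1} \subseteq \cdots \subseteq W_m \cap V_{m-1} = V_{m-1}\). Since \(V_{m-1}\) is a hyperplane, \(\dim(W_i \cap V_{m-1}) \in \{\dim W_i - 1, \dim W_i\}\), so consecutive dimensions increase by at most one. Let \(j\) be the smallest index with \(W_j \not\subseteq V_{m-1}\); such a \(j\) exists because \(W_m = \bF^m\). Then:
\begin{itemize}
\item for \(i < j\), \(W_i \cap V_{m-1} = W_i\), which has dimension \(i\);
\item for \(i \ge j\), \(\dim(W_i \cap V_{m-1}) = i-1\), since the intersection is a proper subspace of \(W_i\) of codimension exactly one.
\end{itemize}
Deleting the repeated term \(W_j \cap V_{m-1} = W_{j-1}\) therefore leaves a complete flag \(W'_\bullet\) in \(V_{m-1} \cong \bF^{m-1}\).

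\textbf{Step 2 (applying the inductive hypothesis).} Apply the induction hypothesis to the complete flags \((V_1 \subset \cdots \subset V_{m-1})\) and \(W'_\bullet\) in \(V_{m-1}\). This gives a basis \(f_1,\ldots,f_{m-1}\) of \(V_{m-1}\) and a permutation \(\rho'\) of \([m-1]\) such that \(V_i = \langle f_1,\ldots,f_i\rangle\) and \(W'_i = \langle f_{\rho'(1)},\ldots,f_{\rho'(i)}\rangle\) for all \(i \in [m-1]\).

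\textbf{Step 3 (adding the last vector).} Pick \(f_m \in W_j \setminus V_{m-1}\). Then \(\{f_1,\ldots,f_m\}\) is a basis of \(\bF^m\), and \(V_m = \langle f_1,\ldots,f_m\rangle\). Define \(\rho(i) = \rho'(i)\) for \(i<j\), \(\rho(j) = m\), and \(\rho(i) = \rho'(i-1)\) for \(i>j\). We check the two ranges of indices:
\begin{itemize}
\item For \(i<j\), \(W_i = W'_i = \langle f_{\rho(1)},\ldots,f_{\rho(i)}\rangle\).
\item For \(i \ge j\), \(W_i \cap V_{m-1} = W'_{i-1}\). Since \(f_m \in W_j \subseteq W_i\) and \(f_m \notin V_{m-1}\), we get \(W_i = W'_{i-1} \oplus \langle f_m\rangle\). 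Its spanning set is exactly \(\{f_{\rho(1)},\ldots,f_{\rho(i)}\}\).
\end{itemize}

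The only delicate point is the index bookkeeping in Step 1: confirming that the intersected chain, after deleting the single repetition at \(j\), is a genuine complete flag, and that \(W_i \cap V_{m-1}\) coincides with \(W'_{i-1}\) for \(i \ge j\). Both follow from the dimension count above.
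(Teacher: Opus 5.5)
Your proof is correct. It differs from the paper in that the paper gives no proof at all: it treats Lemma~\ref{lem: steinberg} as a known fact and points to Steinberg's original paper and to a modern account that, as the paper itself notes, covers only real vector spaces.

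You instead give the standard self-contained induction on $m$. Intersecting $W_\bullet$ with the hyperplane $V_{m-1}$, the dimension formula forces exactly one repetition, at the first index $j$ with $W_j\not\subseteq V_{m-1}$. The inductive hypothesis, applied inside $V_{m-1}$ (identified with $\bF^{m-1}$), handles everything in the hyperplane. A single vector $f_m\in W_j\setminus V_{m-1}$, inserted at position $j$ of the permutation, then completes both flags. The bookkeeping checks out, including the edge case $j=1$: there $W_1\cap V_{m-1}=0$, and you need the convention $W'_0=0$ to read $W_1=W'_0\oplus\langle f_m\rangle$.

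Your route buys self-containedness and validity over an arbitrary field. That matters here, because Prop.~\ref{prop: dimensional bound} is stated over a general $\bF$ and applied over $\bC$, while the modern reference covers only the real case. The citation route buys only brevity. Your construction is also the geometric content of the Bruhat decomposition of $GL_m(\bF)$, with $\rho$ recording the relative position of the two flags.
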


A proof of this fact is in Ref.~\cite{steinberg1951geometric} and a modern account of it, though restricted to real vector spaces, is in Ref.~\cite{glaudo2025simultaneousgeneratingsetsflags}. The second ingredient we use is a statement concerning the intersection of a downward-closed set and a locally-permuted upward-closed set in a product of finite chains.

\begin{prop}
\label{prop: filter and ideal intersection}
Let \(\Omega\) denote a downward-closed subset of \(\Pi_{i=1}^n [d_i]\) and \(\Upsilon\) denote an upward-closed subset of \(\Pi_{i=1}^n [d_i]\). For each \(i \in [n]\), let \(\rho_i\) denote a permutation of the elements of \([d_i]\). Then
\begin{equation}
\label{eq: filter and ideal intersection bound}
| \Omega \cap \Pi_{i=1}^n \rho_i (\Upsilon)| \geq |\Omega \cap \Upsilon|.
\end{equation}
\end{prop}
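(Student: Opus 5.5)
The plan is to replace the permutations \(\rho_1,\ldots,\rho_n\) one coordinate at a time and to show that no single replacement decreases the size of the intersection with \(\Omega\). Write \(\rho := \Pi_{i=1}^n \rho_i\), acting coordinatewise. For \(j \in \{0,1,\ldots,n\}\), set
\[
\Upsilon_j := (\rho_1 \times \cdots \times \rho_j \times \mathrm{id} \times \cdots \times \mathrm{id})(\Upsilon),
\]
so that \(\Upsilon_0 = \Upsilon\) and \(\Upsilon_n = \Pi_{i=1}^n \rho_i(\Upsilon)\). Then \eqref{eq: filter and ideal intersection bound} follows by telescoping once we show
\[
|\Omega \cap \Upsilon_j| \geq |\Omega \cap \Upsilon_{j-1}| \quad \text{for each } j \in [n].
\]

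Fix \(j\) and decompose both sets into fibers along coordinate \(j\). For each \(z \in \Pi_{i \neq j} [d_i]\), let \(\Omega_z := \{t \in [d_j] \mid (t,z) \in \Omega\}\), where \(t\) is placed in the \(j\)th slot. Define \((\Upsilon_{j-1})_z\) and \((\Upsilon_j)_z\) similarly. Then
\[
|\Omega \cap \Upsilon_j| = \sum_z |\Omega_z \cap (\Upsilon_j)_z|,
\]
and the same holds with \(j-1\) in place of \(j\), so it suffices to compare the two sums fiber by fiber. Three facts are needed.
\begin{enumerate}
\item Since \(\Omega\) is downward closed, each \(\Omega_z\) is an initial segment \(\{1,\ldots,a_z\}\) of \([d_j]\).
\item Each \((\Upsilon_{j-1})_z\) is a final segment of \([d_j]\). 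Indeed, \(\Upsilon_{j-1}\) is obtained from \(\Upsilon\) by relabelling only coordinates other than \(j\). Hence \((\Upsilon_{j-1})_z = \Upsilon_{z'}\), where \(z'\) is obtained from \(z\) by applying \(\rho_i^{-1}\) in the coordinates \(i<j\). The set \(\Upsilon_{z'}\) is a final segment because \(\Upsilon\) is upward closed.
\item \((\Upsilon_j)_z = \rho_j\bigl((\Upsilon_{j-1})_z\bigr)\), because the two sets differ only by applying \(\rho_j\) in coordinate \(j\). In particular both fibers have the same cardinality \(b_z\).
\end{enumerate}

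The fiberwise inequality is then elementary. Let \(I\) be an initial segment of \([d_j]\) of size \(a\), let \(F\) be a final segment of size \(b\), and let \(S\) be any subset of size \(b\). Then
\[
|I \cap S| \geq a + b - d_j \quad \text{and} \quad |I \cap S| \geq 0,
\]
while \(|I \cap F| = \max(0, a+b-d_j)\) exactly. Applying this with \(I = \Omega_z\), \(F = (\Upsilon_{j-1})_z\) and \(S = (\Upsilon_j)_z\) gives \(|\Omega_z \cap (\Upsilon_j)_z| \geq |\Omega_z \cap (\Upsilon_{j-1})_z|\). Summing over \(z\) proves the step, and telescoping over \(j\) proves the proposition.

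The main point needing care is the second fact above, namely that the intermediate set \(\Upsilon_{j-1}\) remains ``upward closed along coordinate \(j\)''. This can fail for a general intermediate set, which is why we use the specific ordering in which coordinate \(j\) is untouched until step \(j\). It is the reason one cannot simply permute all coordinates at once and argue fiberwise.
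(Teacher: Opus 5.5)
Your proof is correct, but it is organized differently from the paper's. The paper inducts on \(n\). It slices \(\Omega\) and \(\Upsilon\) along the last coordinate into nested families \(\Omega_{d_n} \subseteq \cdots \subseteq \Omega_1\) and \(\Upsilon_1 \subseteq \cdots \subseteq \Upsilon_{d_n}\) of subsets of \(\Pi_{i=1}^{n-1}[d_i]\). It applies the inductive hypothesis slice by slice to absorb \(\rho_1,\ldots,\rho_{n-1}\). It then removes \(\rho_n\), which permutes whole slices, by an exchange argument: a sequence of transpositions turns \(\rho_n\) into the identity, and each step is justified by \(|A\cap B| + |A'\cap B'| \geq |A\cap B'| + |A'\cap B|\) for \(A\subseteq A'\) and \(B\subseteq B'\). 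You instead telescope over coordinates and cut into one-dimensional lines, so the only combinatorial input is the chain case \(n=1\). Your observation that permuting coordinates other than \(j\) only reindexes the lines in direction \(j\) replaces both the induction and the transposition bookkeeping. The two arguments are closer than they look. Expand the paper's exchange inequality pointwise over \(w \in \Pi_{i=1}^{n-1}[d_i]\); then \(\{k : w \in \Omega_k\}\) is an initial segment and \(\{j : w \in \Upsilon_j\}\) a final segment of \([d_n]\), and you recover exactly your line inequality in direction \(n\). Likewise, the paper's chain of inequalities, once unrolled, is your telescoping done in the order \(\rho_n, \rho_{n-1}, \ldots, \rho_1\). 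Your route is shorter and makes plain that everything reduces to the one-dimensional fact. The paper's route never has to analyze partially permuted intermediate sets, which is the delicate point you flagged, because at every level of the induction it works only with genuine down-sets and up-sets; it pays for this with the exchange argument.
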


\begin{proof}
We induct on \(n\). The base case is the \(n=1\) case. We observe that \([d_1]\) is a chain, and so the cardinality of \(\Omega \cap \rho_1(\Upsilon)\) equals the number of elements in \(\rho_1(\Upsilon)\) that are less than or equal to \(|\Omega|\). If \(|\Omega| + |\Upsilon| < d_1\), then \(|\Omega \cap \Upsilon| = 0\) which is no larger than \(|\Omega \cap \rho_1(\Upsilon)|\). Else, every \(x \in [d_1]\) would be either among the first \(|\Omega|\) elements or among the last \(|\Upsilon|\) elements, which implies \(|\Omega \cup \Upsilon| = d_1\). Then, by inclusion-exclusion, \(| \Omega \cap \Upsilon| = |\Omega| + |\Upsilon| - d_1 = |\Omega| + |\rho_1(\Upsilon)| - d_1 \leq |\Omega| + |\rho_1(\Upsilon)| - | \Omega \cup \rho_1 (\Upsilon)| = | \Omega \cap \rho_1(\Upsilon)|\). This completes the base case.

Since \(\Upsilon\) is upward closed, it is a disjoint union \(\bigcup_{j=1}^{d_n} \Upsilon_j \times \{j\}\) where the \(\Upsilon_j\)'s are upward-closed subsets of \(\Pi_{j=1}^{n-1} [d_j]\) that satisfy \(\Upsilon_1 \subseteq \cdots \subseteq \Upsilon_{d_n}\). By the same token, \(\Omega\) is a disjoint union \(\bigcup_{j=1}^{d_n} \Omega_j \times \{j\}\) where the \(\Omega_j\)'s are downward-closed subsets of \(\Pi_{j=1}^{n-1} [d_j]\) that satisfy \(\Omega_{d_n} \subseteq \cdots \subseteq \Omega_1\). We observe that 
\begin{gather}
|\Omega \cap \Upsilon| = \sum_{j=1}^{d_n} | \Omega_j \cap \Upsilon_j|
\end{gather}
and that 
\begin{gather}
| \Omega \cap \Pi_{i=1}^n \rho_i (\Upsilon)|  = \sum_{j=1}^{d_n} | \Omega_{\rho_n(j)} \cap \Pi_{i=1}^{n-1} \rho_i (\Upsilon_j)|.
\end{gather}
By the inductive hypothesis,  we have the inequality \(| \Omega_{\rho_n(j)} \cap \Pi_{i=1}^{n-1} \rho_i (\Upsilon_j)| \geq | \Omega_{\rho_n (j)} \cap \Upsilon_j|\) for each \(j \in [d_n]\). It remains to prove that
\begin{gather}
\label{eq: intersection sum inequality}
\sum_{j=1}^{d_n} | \Omega_{\rho_n(j)} \cap \Upsilon_j| \geq \sum_{j=1}^{d_n} | \Omega_{j} \cap \Upsilon_j|.
\end{gather}
If \(\rho_n(1) \neq 1\), then the inclusions \(\Omega_{\rho_n(1)} \subseteq \Omega_1\) and \(\Upsilon_1 \subseteq \Upsilon_{\rho_n^{-1} (1)}\) imply
\begin{align}
| \Omega_{\rho_n(1)} \cap \Upsilon_1 | + | \Omega_1 \cap \Upsilon_{\rho_n^{-1} (1)}| \geq | \Omega_{\rho_n(1)} \cap \Upsilon_{\rho_n^{-1} (1)}| + | \Omega_1 \cap \Upsilon_1|. 
\end{align}
Therefore, the sum \(\sum_{j=1}^{d_n} | \Omega_{\rho_n(j)} \cap \Upsilon_j|\) is lower bounded by \(\sum_{j=1}^{d_n} | \Omega_{\rho'_n(j)} \cap \Upsilon_j|\), where \(\rho'_n\) is the permutation given by \(\rho'_n(1) := 1, \rho_n'(\rho^{-1}_n(1)) := \rho_n(1)\), and \(\rho'_n(x) := \rho_n(x)\) for all \(x \notin \{1, \rho_n^{-1}(1)\}\). If \(d_n \geq 2\) and \(\rho'_n(2) \neq 2\), then we apply the same argument to \(\rho_n'\) with \(2\) taking the role of \(1\) in order to transform \(\rho_n'\) into a permutation that fixes both \(1\) and \(2\). We go on in this way until \(\rho_n\) is transformed into the identity permutation on \([d_n]\). Because the value of the sum at each step either stays the same or decreases, the inequality \eqref{eq: intersection sum inequality} is proven. \end{proof}

We now prove the lower bounds on the dimensions of subspace intersections that we seek. 
\begin{prop}
\label{prop: dimensional bound}
For a field \(\bF\), let \(\{ \ket{\bm{\alpha}_x} \mid x \in \Pi_{i=1}^n [d_i]\}\) and \(\{ \ket{\bm{\beta}_x} \mid x \in \Pi_{i=1}^n [d_i]\}\) be tensor-product bases for \(\bigotimes_{i=1}^n \bF^{d_i}\). For each downward-closed subset \(\Omega \subseteq \Pi_{i=1}^n [d_i]\) and each upward-closed subset \(\Upsilon \subseteq \Pi_{i=1}^n [d_i]\), it holds that
\begin{align}
\label{eq: dimension bound}
\dim \Big( \langle \{ \ket{\bm{\alpha}_x} \mid x \in \Omega\} \rangle \cap \langle \{ \ket{\bm{\beta}_x} \mid x \in \Upsilon\} \rangle\Big) \geq |\Omega \cap  \Upsilon|.
\end{align}
\end{prop}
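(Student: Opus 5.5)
Our plan is to apply Lem.~\ref{lem: steinberg} factor by factor and then reduce the dimension count to the combinatorial bound of Prop.~\ref{prop: filter and ideal intersection}. Write \(\ket{\bm{\alpha}_x} = \bigotimes_{i=1}^n \ket{\alpha_{i,x_i}}\) and \(\ket{\bm{\beta}_x} = \bigotimes_{i=1}^n \ket{\beta_{i,x_i}}\), where for each \(i\) the sets \(\{\ket{\alpha_{i,j}}\}_{j=1}^{d_i}\) and \(\{\ket{\beta_{i,j}}\}_{j=1}^{d_i}\) are bases of \(\bF^{d_i}\). Bases of \(\bigotimes_i \bF^{d_i}\) consisting of product vectors must have this form up to scalars, because the vectors are indexed by tuples and are described as tensor-product bases.

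For each \(i\), form the complete flag \(V^{(i)}_j := \langle \ket{\alpha_{i,1}},\ldots,\ket{\alpha_{i,j}}\rangle\). Since \(\Upsilon\) is upward closed, form a second complete flag by taking spans of the \(\beta\)'s from the top: \(W^{(i)}_j := \langle \ket{\beta_{i,d_i}},\ldots,\ket{\beta_{i,d_i-j+1}}\rangle\). Lem.~\ref{lem: steinberg} then gives a basis \(\{\ket{f_{i,1}},\ldots,\ket{f_{i,d_i}}\}\) of \(\bF^{d_i}\) and a permutation \(\pi_i\) with \(V^{(i)}_j = \langle f_{i,1},\ldots,f_{i,j}\rangle\) and \(W^{(i)}_j = \langle f_{i,\pi_i(1)},\ldots,f_{i,\pi_i(j)}\rangle\). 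Define \(\rho_i\) on \([d_i]\) by \(\rho_i(d_i-j+1) := \pi_i(j)\). Then for every \(t\), the span of \(\{\ket{\beta_{i,s}}: s \geq t\}\) equals the span of \(\{\ket{f_{i,\rho_i(s)}}: s\geq t\}\).

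Next we show that the two subspaces in \eqref{eq: dimension bound} are spanned by subsets of the single product basis \(\ket{\bm f_y} := \bigotimes_i \ket{f_{i,y_i}}\). Since \(\Omega\) is downward closed, it is a union of boxes \(\Pi_i [1, m_i]\), namely the principal ideals of its elements. The span of \(\{\ket{\bm{\alpha}_x}: x \in \Pi_i[1,m_i]\}\) is \(\bigotimes_i V^{(i)}_{m_i}\), which equals the span of \(\{\ket{\bm f_y} : y\in \Pi_i[1,m_i]\}\). Taking the sum over the boxes gives \(\langle \bm\alpha_x : x\in\Omega\rangle = \langle \bm f_y : y \in \Omega\rangle\).

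In the same way, \(\Upsilon\) is a union of up-boxes \(\Pi_i [t_i,d_i]\). The span of the \(\bm\beta\)'s over such a box is \(\bigotimes_i \langle f_{i,\rho_i(s)} : s\geq t_i\rangle\). Hence \(\langle \bm\beta_x : x\in\Upsilon\rangle = \langle \bm f_y : y\in \Pi_i\rho_i(\Upsilon)\rangle\), where \(\Pi_i\rho_i\) acts coordinatewise.

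Two subspaces spanned by subsets \(S\) and \(T\) of one basis intersect in the span of \(S\cap T\). The intersection in \eqref{eq: dimension bound} therefore has dimension \(|\Omega \cap \Pi_i\rho_i(\Upsilon)|\). By Prop.~\ref{prop: filter and ideal intersection} this is at least \(|\Omega\cap\Upsilon|\), which proves the claim.

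We expect the main obstacle to be the bookkeeping in the last two steps. The span of a union of boxes of product vectors is the sum of the box spans, so this part is fine. What needs care is the order reversal: the \(\beta\)-flag must be built from the top so that the image of an up-box under the coordinatewise permutations is exactly \(\{y : y_i \in \rho_i([t_i,d_i])\}\). Only with that choice does the resulting set match the \(\Pi_i\rho_i(\Upsilon)\) appearing in Prop.~\ref{prop: filter and ideal intersection}.
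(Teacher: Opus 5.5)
Your proof is correct and is essentially the paper's argument. It uses the same pair of flags (with the $\beta$-flag built from the top), the same mirrored permutation (your $\rho_i$ is the paper's $\tilde{\rho}_i$), the same identification of both subspaces with spans of subsets of a single product basis, and the same final appeal to Prop.~\ref{prop: filter and ideal intersection}; your decomposition into principal down-boxes and up-boxes is just a repackaging of the paper's elementwise two-way membership argument.

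One small caution concerns your aside that any basis of product vectors must be a tensor product of local bases: that is false in general. For example, $\ket{e_1}\otimes\ket{e_1}$, $\ket{e_1}\otimes\ket{e_2}$, $\ket{e_2}\otimes\ket{e_1}$, $(\ket{e_1}+\ket{e_2})\otimes(\ket{e_1}+\ket{e_2})$ is a basis of $\bF^2\otimes\bF^2$ that is not of this form. So the form $\bigotimes_i\ket{\alpha_{i,x_i}}$ should be taken as the meaning of ``tensor-product basis'' rather than derived.
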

\begin{proof}
For each \(i \in [n]\), Lem.~\ref{lem: steinberg} guarantees the existence of a basis \(\{\ket{\gamma_{i, j}}\}_{j=1}^{d_i}\) of \(\bF^{d_i}\) and a permutation \(\rho_i : [d_i] \rightarrow [d_i]\) such that
\begin{align}
\label{eq: steinberg gen}
    \langle \{ \ket{\alpha_{i,1}}, \ldots, \ket{\alpha_{i,j}}\}\rangle &= \langle \{\ket {\gamma_{i,1}}, \ldots, \ket{\gamma_{i,j}}\}\rangle, \\ 
\label{eq: steinberg gen 2}
    \langle \{ \ket{\beta_{i, d_i}}, \ldots, \ket{\beta_{i,d_i-j+1}}\}\rangle  &= \langle \{\ket{\gamma_{i,\rho_i(1)}}, \ldots, \ket{\gamma_{i,\rho_i(j)}}\} \rangle,
\end{align}
for each \(j \in [d_i]\). Notice that the \(\beta\) basis is in reverse order. It is convenient to express \eqref{eq: steinberg gen 2} using the mirrored permutation \(\tilde{\rho}_i\) defined by \(\tilde{\rho}_i(d_i - j +1) := \rho_i(j)\) for all \(j \in [d_i]\). Rewritten, \eqref{eq: steinberg gen 2} reads 
\begin{align}
\label{eq: steinberg gen rewr}
\langle \{ \ket{\beta_{i, d_i}}, \ldots, \ket{\beta_{i,d_i-j+1}}\}\rangle  &= \langle \{\ket{\gamma_{i,\tilde{\rho}_i(d_i)}}, \ldots, \ket{\gamma_{i,\tilde{\rho}_i(d_i - j + 1)}}\} \rangle.
\end{align}

For \(x \in \Pi_{i=1}^n [d_i]\), we write \(\ket{\bm{\gamma}_{x}}\) to denote \(\bigotimes_{i=1}^n  \ket{\gamma_{i,x_i}}\). Consider an element \(y \in \Omega\). By \eqref{eq: steinberg gen}, for each \(i \in [n]\), we have the membership \(\ket{\alpha_{i, y_i}} \in \langle \{\ket {\gamma_{i,1}}, \ldots, \ket{\gamma_{i,y_i}}\}\rangle\) which implies that 
\begin{gather}
\label{eq: alpha member}
\ket{\bm{\alpha}_y} \in \bigotimes_{i=1}^n \langle \{ \ket {\gamma_{i,1}}, \ldots, \ket{\gamma_{i,y_i}} \} \rangle = \langle \{ \ket{\bm{\gamma}_x} \mid x \leq y \} \rangle.
\end{gather}
Since \(\Omega\) is downward closed, this implies \(\ket{\bm{\alpha}_y} \in \langle \{ \ket{\bm{\gamma}_x} \mid x \in \Omega\} \rangle\). A similar argument shows that \(\ket{\bm{\gamma}_y} \in \langle \{ \ket{\bm{\alpha}_x} \mid x \in \Omega\} \rangle\). Hence, we have the equality 
\begin{gather}
\label{eq: omega equality between alpha and gamma}
\langle \{ \ket{\bm{\alpha}_x} \mid x \in \Omega\} \rangle = \langle \{ \ket{\bm{\gamma}_x} \mid x \in \Omega\} \rangle.     
\end{gather}
Now, consider an element \(z \in \Upsilon\). By \eqref{eq: steinberg gen rewr}, for each \(i \in [n]\), we have the membership \(\ket{\beta_{i,z_i}} \in \langle \{ \ket{\gamma_{i,\tilde{\rho}_i(d_i)}}, \ldots, \ket{\gamma_{i, \tilde{\rho}_i (z_i)}} \} \rangle\)
which implies that 
\begin{gather}
\label{eq: beta member}
\ket{\bm{\beta}_z} \in \bigotimes_{i=1}^n \langle \{ \ket{\gamma_{i,\tilde{\rho}_i(d_i)}}, \ldots, \ket{\gamma_{i, \tilde{\rho}_i (z_i)}} \} \rangle = \langle \{ \ket{\bm{\gamma}_x} \mid z \leq x', \Pi_{i=1}^n \tilde{\rho}_i (x') = x\} \rangle.
\end{gather}
Since \(\Upsilon\) is upward closed, this implies \(\ket{\bm{\beta}_z} \in \langle \{ \ket{\bm{\gamma}_x} \mid x \in \Pi_{i=1}^n \tilde{\rho}_i (\Upsilon) \} \rangle\). 
Similarly, if \(z' \in \Pi_{i=1}^n \tilde{\rho}_i(\Upsilon)\), then \(\ket{\bm{\gamma}_{z'}} \in \langle \{ \ket{\bm{\beta}_x} \mid x \in \Upsilon\}\rangle\). Hence, we also have the equality
\begin{align}
\label{eq: upsilon equality for omega and beta}
\langle \{\ket{\bm{\beta}_x} \mid x \in \Upsilon\} \rangle = \langle \{\ket{\bm{\gamma}_x} \mid x \in \Pi_{i=1}^n \tilde{\rho}_i (\Upsilon)\}\rangle.
\end{align}

Therefore, the dimension of the intersection \(\langle \{ \ket{\bm{\alpha}_x} \mid x \in \Omega\} \rangle \cap \langle \{ \ket{\bm{\beta}_x} \mid x \in \Upsilon\} \rangle\) equals the dimension of the intersection \(\langle \{ \ket{\bm{\gamma}_x} \mid x \in \Omega\} \rangle \cap \langle \{ \ket{\bm{\gamma}_x} \mid x \in \Pi_{i=1}^n \tilde{\rho}_i (\Upsilon)\} \rangle\) which, because the \(\ket{\bm{\gamma}_x}\)'s are linearly independent, equals \(|\Omega \cap \Pi_{i=1}^n \tilde{\rho}_i (\Upsilon)|\) which, by Prop.~\ref{prop: filter and ideal intersection}, is bounded below by \(| \Omega \cap \Upsilon|\). \end{proof}

We use the following lemma to metabolize the inequality \eqref{eq: dimension bound} into a majorization relation. 

\begin{lem}
\label{lem: two proj maj rel with orth const}
Let \(U\) and \(W\) denote finite-dimensional subspaces of some complex or real inner product space. Assume that there exist two subspaces \(U_1\) and \(W_1\) satisfying \(U_1 \subseteq U\), \(W_1 \subseteq W\), \(U_1 \perp W\) and \(W_1 \perp U\). Then the projector onto \(U\), denoted with \(P_U\), and the projector onto \(W\), denoted with \(P_W\), satisfy
\begin{align}
\label{eq: maj two proj with orth compo}
\lambda( P_U + P_W) \preceq (2, \ldots, 2, 1, \ldots,1, 0, \ldots,0),
\end{align}
where the multiplicity of \(2\) is \(\min ( \dim(U) - \dim(U_1), \dim(W) - \dim(W_1))\) and the multiplicity of \(1\) is \(\dim(U) + \dim(W) - 2 \min ( \dim(U) - \dim(U_1), \dim(W) - \dim(W_1))\).
\end{lem}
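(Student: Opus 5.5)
We plan to exploit that $P_U$ and $P_W$ both act as simple operators on the pieces $U_1$ and $W_1$, and that everything else is squeezed into a small subspace. Write $u=\dim U$, $w=\dim W$, $u_1=\dim U_1$, $w_1=\dim W_1$, and $m=\min(u-u_1,w-w_1)$.

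\textbf{Reduction to a smaller sum.} Since $U_1\subseteq U$ and $U_1\perp W$, every $v\in U_1$ satisfies $(P_U+P_W)v=v$; likewise $(P_U+P_W)v=v$ for $v\in W_1$. Also $U_1\perp W_1$, because $W_1\subseteq W$ and $U_1\perp W$. Hence $U_1\oplus W_1$ is an invariant subspace on which $P_U+P_W$ acts as the identity, and it contributes $u_1+w_1$ eigenvalues equal to $1$. Its orthogonal complement is also invariant, since $P_U+P_W$ is self-adjoint. On that complement the operator equals $P_{U'}+P_{W'}$, where $U'=U\ominus U_1$ and $W'=W\ominus W_1$. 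This holds because $P_U=P_{U_1}+P_{U'}$ and $P_{W}=P_{W_1}+P_{W'}$, and because $P_{W'}$ annihilates $U_1$ (as $U_1\perp W$), and symmetrically for the other pieces. We have $\dim U'=u-u_1$ and $\dim W'=w-w_1$.

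\textbf{The smaller sum.} Next, $P_{U'}+P_{W'}$ has eigenvalues in $[0,2]$ and trace $(u-u_1)+(w-w_1)$. Its rank is at most $(u-u_1)+(w-w_1)$. Moreover, the eigenspace for eigenvalues strictly greater than $1$ is limited: any eigenvalue above $1$ requires the vector to have nontrivial component in both $U'$ and $W'$. By the standard two-projector (Jordan/Halmos) decomposition, eigenvalues come in pairs $1\pm\cos\theta$ from two-dimensional blocks, plus $2$ on $U'\cap W'$, $1$ on $U'\cap W'^{\perp}$ or $U'^{\perp}\cap W'$, and $0$ otherwise. So the number of eigenvalues exceeding $1$ is at most $m$, and each such eigenvalue $1+c$ is paired with a partner $1-c$.

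\textbf{Majorization check.} It then suffices to verify, for each $k$, that the sum of the $k$ largest eigenvalues is at most the corresponding partial sum of $(2,\dots,2,1,\dots,1,0,\dots,0)$ with $m$ twos and $u+w-2m$ ones. The total trace is $u+w$, which equals $2m+(u+w-2m)$, so the totals agree. For $k\le m$ the bound $\sigma_k\le 2k$ is trivial. For $k>m$, only at most $m$ eigenvalues exceed $1$ and each is at most $2$, so $\sigma_k\le 2m+(k-m)$. This bound is at most $2m+(k-m)$ as long as $k\le u+w-m$. Beyond that point the target partial sum is already $u+w$, and $\sigma_k$ never exceeds the trace because all eigenvalues are nonnegative.

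The main care point is the claim that at most $m$ eigenvalues exceed $1$. I would prove it directly, without Halmos, via a dimension count. If the spectral subspace for eigenvalues $>1$ had dimension greater than $u-u_1$, it would meet $U'^{\perp}$ nontrivially. On a nonzero vector $v\in U'^{\perp}$ we have $\langle v,(P_{U'}+P_{W'})v\rangle=\langle v,P_{W'}v\rangle\le\|v\|^2$. This contradicts the fact that the quadratic form exceeds $\|v\|^2$ on every nonzero vector of that spectral subspace. The symmetric argument bounds the dimension by $w-w_1$, giving the bound $m$.
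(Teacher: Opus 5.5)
Your proof is correct. Its first step matches the paper's: both write $P_U+P_W$ as the identity on $U_1\oplus W_1$ plus $P_{U'}+P_{W'}$ on the orthogonal complement, where $U'=U\ominus U_1$ and $W'=W\ominus W_1$.

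The two proofs differ at the key step.
\begin{itemize}
\item The paper invokes Ky Fan's relation $\lambda(P_{U'}+P_{W'})\preceq\lambda(P_{U'})+\lambda(P_{W'})$. It then uses the fact that majorization is preserved under direct sums, and reads off the multiplicities of $2$, $1$ and $0$.
\item You prove directly what Ky Fan supplies in this special case. You show that the spectral subspace $S$ of $P_{U'}+P_{W'}$ for eigenvalues above $1$ has dimension at most $m$. Your quadratic-form argument on a nonzero vector of $S\cap U'^{\perp}$ is sound. It also works when the ambient space is infinite-dimensional, since $S$ is finite-dimensional and $P_{U'}|_S$ has kernel of dimension at least $\dim S-\dim U'$.
\item You then combine this with $0\le P_U+P_W\le 2I$ and trace $u+w$, and check the partial sums by hand. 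This means you never need the direct-sum compatibility of majorization.
\end{itemize}

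The paper's version is shorter, given the standard results. Yours is self-contained: in effect it reproves the two-projector case of Ky Fan's theorem through a Courant--Fischer-type dimension count.

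The Jordan--Halmos remark plays no role in your argument and can be dropped.
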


\begin{proof}
Let \(U'\) denote the orthogonal complement of \(U_1\) in \(U\) and \(W'\) denote the orthogonal complement of \(W_1\) in \(W\). The sum \(P_U +P_W\) may be written as an orthogonal direct sum \((P_{U'} + P_{W'}) \oplus P_{U_1} \oplus P_{W_1}\). We apply Ky Fan's majorization relation to the first summand to obtain \(\lambda(P_{U'} + P_{W'}) \preceq \lambda(P_{U'}) + \lambda(P_{W'})\). Direct sums preserve the majorization order; see Ch.~5 in Ref.~\cite{Marshall2011}. Thus,
\begin{align}
\label{eq: estimate}
\lambda (P_U + P_W) &= \lambda(P_{U'} + P_{W'}) \oplus \lambda(P_{U_1}) \oplus \lambda(P_{W_1})  \\
&\preceq (\lambda(P_{U'}) + \lambda(P_{W'}) )\oplus \lambda(P_{U_1}) \oplus \lambda(P_{W_1}).
\end{align}
The coordinates of the right-hand side of this majorization relation are in \(\{0,1,2\}\) and they sum to \(\dim(U)+\dim(W)\). The number of coordinates equal to \(2\) is \(\min(\dim(U'), \dim(W')) = \min (\dim(U) - \dim(U_1), \dim(W) - \dim(W_1))\) and so the number of coordinates equal to \(1\) is what remains of the total: \(\dim(U) + \dim(W) - 2 \min (\dim(U) - \dim(U_1), \dim(W) - \dim(W_1))\). \end{proof}

Finally, we are ready to prove Thm.~\ref{thm: majorization for proj sum}.

\begin{proof}[Proof of Thm.~\ref{thm: majorization for proj sum}]

Since \(\Omega^{(B)}\) is downward closed, its complement \((\Omega^{(B)})^c\) is upward closed. By Prop.~\ref{prop: dimensional bound}, the dimension of \(\langle \{ \ket{\bm{a}_x} \mid x \in \Omega^{(A)}\}\rangle \cap \langle \{ \ket{\bm{b}_x} \mid x \in (\Omega^{(B)})^c\}\rangle\) is at least \(| \Omega^{(A)} \cap (\Omega^{(B)})^c| = | \Omega^{(A)} \setminus \Omega^{(B)}|\). By the same token, Prop.~\ref{prop: dimensional bound} implies that the dimension of \(\langle \{ \ket{\bm{a}_x} \mid x \in (\Omega^{(A)})^c\}\rangle \cap \langle \{ \ket{\bm{b}_x} \mid x \in \Omega^{(B)}\}\rangle\) is at least \(| \Omega^{(B)} \setminus \Omega^{(A)}|\). 

Now, we apply Lem.~\ref{lem: two proj maj rel with orth const} with \(\langle \{ \ket{\bm{a}_x} \mid x \in \Omega^{(A)}\}\rangle\) taking the role of \(U\), \(\langle \{ \ket{\bm{b}_x} \mid x \in \Omega^{(B)}\}\rangle\) taking the role of \(W\), \(\langle \{ \ket{\bm{a}_x} \mid x \in \Omega^{(A)}\}\rangle \cap \langle \{ \ket{\bm{b}_x} \mid x \in (\Omega^{(B)})^c\}\rangle\) taking the role of \(U_1\), and \(\langle \{ \ket{\bm{a}_x} \mid x \in (\Omega^{(A)})^c\}\rangle \cap \langle \{ \ket{\bm{b}_x} \mid x \in \Omega^{(B)}\}\rangle\) taking the role of \(W_1\). The claim follows by noticing that the eigenvalues of \(\sum_{x \in \Omega^{(A)}} \dyad{\bm{e}_x} + \sum_{x \in \Omega^{(B)}} \dyad{\bm{e}_x}\) are in \(\{0,1,2\}\), that the multiplicity of \(2\) is \(|\Omega^{(A)} \cap \Omega^{(B)}|\) (its largest possible value), and that the multiplicity of \(1\) is \(|\Omega^{(A)} \setminus \Omega^{(B)}| + |\Omega^{(B)} \setminus\Omega^{(A)}|\). \end{proof}

\section{Concluding remarks}
\label{sec: conclusion}
One might hope that the relation \eqref{maj rel: sep ky fan} generalizes to every number of summands. That is,
\begin{align}
\label{eq: maj sus}
\lambda \left( \sum\limits_{j=1}^m\bigotimes_{i=1}^n A^{(j)}_i\right) \stackrel{\mathclap{\mathbf{?}}}{\preceq} \sum\limits_{j=1}^m\bigotimes_{i=1}^n \lambda(A^{(j)}_i),
\end{align}
whenever \(A^{(j)}_i \geq 0\) for all \(i \in [n]\) and \(j \in [m]\). However, this is not true for every number of summands $m$ and every number of factors \(n\). Recently, a counterexample has been shown to a closely related conjectured majorization relation in quantum information theory~\cite{Song2026}. We show a counterexample to~\eqref{eq: maj sus} along similar lines.

\begin{example}
\label{ex: counter}
Consider the following elements in \(\bC^2\):
\begin{align}
    \ket{\psi^{(1)}_2}&=\tfrac{1}{\sqrt{26}}\big(\ket{e_1}+(4-3i)\ket{e_2}\big),\quad &&\ket{\psi^{(1)}_3}=\tfrac{1}{\sqrt{30}}\big((2+4i)\ket{e_1}+(1+3i)\ket{e_2}\big),\nonumber\\
    \ket{\psi^{(2)}_1}&=\tfrac{1}{\sqrt{7}}\big((1+i)\ket{e_1}+(1+2i)\ket{e_2}\big),\quad &&\ket{\psi^{(2)}_3}=\tfrac{1}{\sqrt{28}}\big((5+i)\ket{e_1}+(1+i)\ket{e_2}\big),\nonumber\\
    \ket{\psi^{(3)}_1}&=\tfrac{1}{\sqrt{26}}\big(\ket{e_1}+5\ket{e_2}\big),\quad &&\ket{\psi^{(3)}_2}=\tfrac{1}{\sqrt{30}}\big((2+i)\ket{e_1}+(3+4i)\ket{e_2}\big).\nonumber
\end{align}
We denote the identity on \(\bC^2\) by \(I\). The sum of the \(3\) largest eigenvalues of
\begin{align}
    I \otimes \ket{\psi^{(1)}_2}\bra{\psi^{(1)}_2}\otimes\ket{\psi^{(1)}_3}\bra{\psi^{(1)}_3}+\ket{\psi^{(2)}_1}\bra{\psi^{(2)}_1}\otimes{I} \otimes\ket{\psi^{(2)}_3}\bra{\psi^{(2)}_3}+ \ket{\psi^{(3)}_1}\bra{\psi^{(3)}_1}\otimes\ket{\psi^{(3)}_2}\bra{\psi^{(3)}_2}\otimes I  \nonumber
\end{align}
exceeds the sum of the \(3\) largest coordinates of 
\begin{align}
    \lambda(I) \otimes \lambda(\ket{\psi^{(1)}_2}\bra{\psi^{(1)}_2}) &\otimes \lambda(\ket{\psi^{(1)}_3}\bra{\psi^{(1)}_3}) + \lambda(\ket{\psi^{(2)}_1}\bra{\psi^{(2)}_1}) \otimes \lambda(I) \otimes \lambda(\ket{\psi^{(2)}_3}\bra{\psi^{(2)}_3}) \nonumber \\+ &\lambda (\ket{\psi^{(3)}_1}\bra{\psi^{(3)}_1}) \otimes \lambda(\ket{\psi^{(3)}_2}\bra{\psi^{(3)}_2}) \otimes \lambda(I)  \nonumber
\end{align}
by at least \(0.03\). 
\end{example}

The case where \(n = 2\) and \(m \geq 3\) remains to be addressed.

\subsubsection*{Acknowledgments}
We thank Manny Knill, Noah Lordi, Michael Walter, and Benjamin Schumacher for helpful discussions. MA acknowledges support from the US National Science Foundation Grant PHY-2116246.

\bibliographystyle{plainurl}
\bibliography{references}
\end{document}